\definecolor{darkgreen}{rgb}{0,0.7,0}
\definecolor{darkred}{rgb}{0.9,0,0}
\definecolor{darkblue}{rgb}{0,0,0.9}
\newlength{\shorter}
\newcommand{\lie}[3]{\def\test{#2}\def\tst{G}\ifx\test\tst{{}^{#1}#2_{#3}}
\else{{}^{#1}\!#2_{#3}}\fi}
\newcommand{\4}[1]{\widebar{#1}}
\newcommand{\5}[1]{\widehat{#1}}
\newcommand{\9}[1]{{}^{#1}\!}   
\let\oldcirc=\circ
\renewcommand{\circ}{\mathchoice
    {\mathbin{\scriptstyle\oldcirc}}{\mathbin{\scriptstyle\oldcirc}}
    {\mathbin{\scriptscriptstyle\oldcirc}}
    {\mathbin{\scriptscriptstyle\oldcirc}}}
\newlength{\upto}\newlength{\dnto}
\mathchardef\cdot="0201
\def\beq#1\eeq{\begin{equation*}#1\end{equation*}}
\def\beqq#1\eeqq{\begin{equation}#1\end{equation}}
\renewcommand{\:}{\colon}   
\newcommand{\widebar}[1]{\overset{\mskip2mu\hrulefill\mskip2mu}{#1}
		\vphantom{#1}}
\newcommand{\longline}{\bigskip\centerline{\hbox to 5cm{\hrulefill}}\bigskip}
\newcommand{\mxfourb}[8]{#1&#2&#3&#4\\#5&#6&#7&#8\end{smallmatrix}\right)}
\newcommand{\mxfoura}[8]{\left(\begin{smallmatrix}#1&#2&#3&#4\\#5&#6&#7&#8\\}
\newcommand{\Mxfourb}[8]{#1&#2&#3&#4\\#5&#6&#7&#8\end{pmatrix}}
\newcommand{\Mxfoura}[8]{\begin{pmatrix}#1&#2&#3&#4\\#5&#6&#7&#8\\}
\def\trp[#1,#2,#3]{[\hskip-1.5pt[#1,#2,#3]\hskip-1.5pt]}
\DeclareMathAlphabet\EuR{U}{eur}{m}{n}
\SetMathAlphabet\EuR{bold}{U}{eur}{b}{n}
\newcommand{\higherlim}[2]{\displaystyle\setbox1=\hbox{\rm lim}
	\setbox2=\hbox to \wd1{\leftarrowfill} \ht2=0pt \dp2=-1pt
	\setbox3=\hbox{$\scriptstyle{#1}$}
	\def\test{#1}\ifx\test\empty
	\mathop{\mathop{\vtop{\baselineskip=5pt\box1\box2}}}\nolimits^{#2}
	\else
	\ifdim\wd1<\wd3
	\mathop{\hphantom{^{#2}}\vtop{\baselineskip=5pt\box1\box2}^{#2}}_{#1}
	\else
	\mathop{\mathop{\vtop{\baselineskip=5pt\box1\box2}}_{#1}}%
	\nolimits^{#2}
	\fi\fi}
\newcommand{\higherlimm}[2]{\setbox1=\hbox{\rm lim}
	\setbox2=\hbox to \wd1{\leftarrowfill} \ht2=0pt \dp2=-1pt
	\mathop{\mathop{\vtop{\baselineskip=5pt\box1\box2}}}\limits_{#1}
	\nolimits^{#2}}
\newcounter{let} \setcounter{let}{0}
\loop\stepcounter{let}
\edef\csname cal\alph{let}\endcsname%
\newcommand{\tdef}[2][]{\expandafter\newcommand\csname#2\endcsname%
{#1\textup{#2}}}
\newcommand{\fdef}[1]{\expandafter\newcommand\csname#1\endcsname%
{\mathfrak{#1}}}
\newcommand{\bbdef}[1]{\expandafter\newcommand%
\csname#1\endcsname{\mathbb{#1}}}
\newcommand{\itdef}[1]{\expandafter\newcommand\csname#1\endcsname%
{\textit{#1}}}
\newcommand{\gen}[1]{\langle{#1}\rangle}
\let\nsg=\normal
\newcommand{\syl}[2]{\textup{Syl}_{#1}(#2)}
\newcommand{\sylp}[1]{\syl{p}{#1}}
\newcommand{\autf}{\Aut_{\calf}}
\newcommand{\outf}{\Out_{\calf}}
\newcommand{\homf}{\Hom_{\calf}}
\newcommand{\sminus}{\smallsetminus}
\newcommand{\defeq}{\overset{\textup{def}}{=}}
\let\too=\longrightarrow
\newcommand{\longleft}[1]{\;{\leftarrow%
\count255=0 \loop \mathrel{\mkern-6mu}%
    \relbar\advance\count255 by1\ifnum\count255<#1\repeat}\;}
\newcommand{\longright}[1]{\;{\count255=0 \loop \relbar\mathrel{\mkern-6mu}%
    \advance\count255 by1\ifnum\count255<#1\repeat\rightarrow}\;}
\newcommand{\Right}[2]{\overset{#2}{\longright#1}}
\newcommand{\RIGHT}[3]{\mathrel{\mathop{\kern0pt\longright#1}
	\limits^{#2}_{#3}}}
\newcommand{\LEFT}[3]{\mathrel{\mathop{\kern0pt\longleft#1}\limits^{#2}_{#3}}
}
\newcommand{\longleftright}[1]{\;{\leftarrow\mathrel{\mkern-6mu}%
    \count255=0\loop\relbar\mathrel{\mkern-6mu}%
    \advance\count255 by1\ifnum\count255<#1\repeat\rightarrow}\;} 
\newcommand{\onto}[1]{\;{\count255=0 \loop \relbar\joinrel
    \advance\count255 by1
    \ifnum\count255<#1 \repeat \twoheadrightarrow}\;}
\newcommand{\RLEFT}[3]{\mathrel{%
   \mathop{\vcenter{\baselineskip=0pt\hbox{$\kern0pt\longright#1$}%
   \hbox{$\kern0pt\longleft#1$}}}\limits^{#2}_{#3}}}
\newcommand{\RRIGHT}[3]{\mathrel{%
   \mathop{\vcenter{\baselineskip=0pt\hbox{$\kern0pt\longright#1$}%
   \hbox{$\kern0pt\longright#1$}}}\limits^{#2}_{#3}}}
\numberwithin{table}{section}
\renewenvironment{enumerate}[1][]
{\begin{enumerat}[#1]\setlength{\itemsep}{6pt}\setlength{\topsep}{20pt}
\setlength{\partopsep}{20pt}\setlength{\parsep}{20pt}}
{\end{enumerat}}
\renewenvironment{itemize}
{\begin{itemiz}\setlength{\itemsep}{6pt}\setlength{\itemindent}{-20pt}}
{\end{itemiz}}
\newenvironment{enuma}{\begin{enumerate}[{\rm(a) }]}{\end{enumerate}}
\newtheorem{Thm}{Theorem}[section]
\newtheorem{Prop}[Thm]{Proposition}
\theoremstyle{definition}
\newtheorem{Defi}[Thm]{Definition}
\theoremstyle{remark}
\title{Correction to: Reductions to simple fusion systems}
\author{Bob Oliver}
\address{Universit\'e Sorbonne Paris Nord, LAGA, UMR 7539 du CNRS, 
99, Av. J.-B. Cl\'ement, 93430 Villetaneuse, France.}
\email{bobol@math.univ-paris13.fr}
\thanks{B. Oliver is partially supported by UMR 7539 of the CNRS}
\subjclass[2000]{Primary 20E25. Secondary 20D20, 20D05, 20D25, 20D45}
\keywords{Fusion systems, Sylow subgroups, finite simple groups, 
generalized Fitting subgroup, $p$-solvable groups}
\begin{document}

\begin{abstract} 
We fill in a gap in the proof of the main theorem in our earlier paper 
\cite{O-simp}. At the same time, we prove a slightly stronger version of 
the theorem needed for another paper.
\end{abstract}

\maketitle

The main theorem in our earlier paper \cite{O-simp} stated (very roughly) 
that if $\cale\le\calf$ are saturated fusion systems such that $\cale$ is 
normal in $\calf$ and satisfies certain additional conditions, then there 
is a sequence of saturated fusion subsystems 
$\cale=\calf_0\nsg\calf_1\nsg\cdots\nsg\calf_m=\calf$, each normal in the 
following system and normal in $\calf$, such that $\calf_i$ has $p$-power 
index or index prime to $p$ in $\calf_{i+1}$ for each $i$. We refer to 
\cite[Theorem 2.3]{O-simp}, or to Theorems \ref{t:p-solv} and \ref{solv} 
below, for the precise statement. 

The theorem was proven by an inductive argument, where we assume that 
$\calf_{i+1}$ has already been constructed with certain properties before 
constructing $\calf_i$. This inductive argument requires that $\cale$ be 
normal in $\calf_i$ for each $i$, a property that was not justified in 
\cite{O-simp}. The missing details are not hard to fill in, but we think 
it's best to do so formally, especially since the theorem has been applied by 
various people, either directly as in \cite{HL}, or indirectly via Lemma 
2.22 in \cite{AO}. 

Most of the notation and terminology used in \cite{O-simp} will be assumed 
here; we refer to that paper for their definitions. As one exception, since 
the details of the definition of normal fusion subsystems play an important 
role here, we begin by recalling them. 

\begin{Defi} \label{d:E<|F}
Let $\cale\le\calf$ be saturated fusion systems over finite $p$-groups 
$T\le S$. The subsystem $\cale$ is \emph{weakly normal} in $\calf$ if 
\begin{itemize} 

\item $T$ is strongly closed in $\calf$ (in particular, $T\nsg S$), and 

\item (strong invariance condition) for each $P\le Q\le T$, each 
$\varphi\in\Hom_\cale(P,Q)$, and each $\psi\in\homf(Q,T)$, 
$\psi\varphi(\psi|_P)^{-1}\in\Hom_\cale(\psi(P),\psi(Q))$.

\end{itemize}
The subsystem $\cale$ is \emph{normal} in $\calf$ ($\cale\nsg\calf$) if it 
is weakly normal and 
\begin{itemize} 

\item (extension condition) each $\alpha\in\Aut_\cale(T)$ extends to some 
$\4\alpha\in\autf(TC_S(T))$ such that $[\4\alpha,C_S(T)]\le Z(T)$.

\end{itemize}
\end{Defi}

This is different than the definition of a normal fusion subsystem used in 
\cite{O-simp}, but the two definitions are equivalent by \cite[Proposition 
I.6.4]{AKO}. This one has the advantage that it simplifies the 
proof of point (a) in the following proposition.

\begin{Prop} \label{l:E<|F0}
Let $\cale\le\calf_0\le\calf$ be saturated fusion systems over $T\le S_0\le 
S$. Then the following hold.
\begin{enuma} 
\item If $\cale$ is weakly normal in $\calf$, then $\cale$ is weakly normal 
in $\calf_0$.
\item If $\cale\nsg\calf$ and $\cale=O^{p'}(\cale)$, then 
$\cale\nsg\calf_0$.\
\item If $\cale\nsg\calf$ and $\calf_0$ has $p$-power index in $\calf$ 
\cite[Definition I.7.3]{AKO}, then $\cale\nsg\calf_0$. 
\end{enuma}
\end{Prop}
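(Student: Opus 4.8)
The plan is to dispatch (a) by a triviality and then reduce (b) and (c) to a single ``descent of the extension condition,'' letting the two hypotheses enter only at the final membership step.

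For (a) I would simply note that passing from $\calf$ to $\calf_0$ only \emph{removes} morphisms: $\Hom_{\calf_0}(P,Q)\subseteq\homf(P,Q)$ for all $P,Q\le S_0$. Strong closure of $T$ is quantified over morphisms out of subgroups of $T$, so it is inherited by $\calf_0$; and the strong invariance condition for $\cale$ in $\calf_0$ is literally the condition for $\cale$ in $\calf$ restricted to those $\psi$ that happen to lie in $\Hom_{\calf_0}(Q,T)\subseteq\homf(Q,T)$. Thus both clauses of weak normality transfer verbatim. This is exactly where the strong-invariance formulation of Definition~\ref{d:E<|F} pays off: with the older extension-axiom formulation one would instead have to reverify a saturation-type statement inside $\calf_0$.

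For (b) and (c), part (a) already gives weak normality of $\cale$ in $\calf_0$, so only the extension condition remains. Fix $\alpha\in\Aut_\cale(T)$; since $\cale\nsg\calf$ it extends to some $\4\alpha\in\autf(TC_S(T))$ with $[\4\alpha,C_S(T)]\le Z(T)$. The first step is to cut this down to the centralizer in $S_0$: because $T\le S_0$, Dedekind's law gives $TC_S(T)\cap S_0=TC_{S_0}(T)$, and the commutator condition forces $\4\alpha(c)\in cZ(T)\subseteq C_{S_0}(T)$ for every $c\in C_{S_0}(T)$, so $\4\alpha$ preserves $C_{S_0}(T)$ (and $T$) and restricts to $\4\alpha'\in\autf(TC_{S_0}(T))$ with $\4\alpha'|_T=\alpha$ and $[\4\alpha',C_{S_0}(T)]\le Z(T)$. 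At this point $\4\alpha'$ meets every requirement of the extension condition for $\cale\nsg\calf_0$ \emph{except} that a priori it is only an $\calf$-automorphism, and the whole problem is reduced to proving $\4\alpha'\in\Aut_{\calf_0}(TC_{S_0}(T))$. A preliminary computation useful in both cases is that $\4\alpha'(g)g^{-1}\in T$ for all $g\in TC_{S_0}(T)$ (write $g=tc$ and use $[\4\alpha',C_{S_0}(T)]\le Z(T)$), so $\4\alpha'$ translates $TC_{S_0}(T)$ within $T\le S_0$.

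For (c) I would then split $\4\alpha'$ into its commuting $p$- and $p'$-parts. The $p'$-part lies in $O^p(\autf(TC_{S_0}(T)))$, which sits inside $\Aut_{\calf_0}(TC_{S_0}(T))$ precisely because $\calf_0$ has $p$-power index; the $p$-part is an $S_0$-translation in the sense just computed (and $\hyp(\calf)\le S_0$), so it lies in $\calf_0$ by the hyperfocal-subgroup description of $p$-power-index subsystems, where the only care needed is to quote the correct form of that description \cite[\S I.7]{AKO}. For (b) the intermediate $\calf_0$ is arbitrary, so I would instead use that $O^{p'}(\cale)=\cale$ forces $\Aut_\cale(T)$ to be generated by its elements of $p$-power order; since the set of $\alpha$ admitting a good extension in $\calf_0$ is closed under composition, it suffices to treat $\alpha$ of $p$-power order. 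For such $\alpha$, the $p'$-part of $\4\alpha'$ centralizes $T$ and acts trivially on $C_{S_0}(T)/Z(T)$, hence is trivial (the stability group of $T\nsg TC_{S_0}(T)$ is a $p$-group), so $\4\alpha'$ is itself of $p$-power order; independently, saturation of $\calf_0$ (using that $T$ is fully normalized, hence fully centralized, in $\calf_0$, and that $C_{S_0}(T)\le N_\alpha$) produces \emph{some} extension of $\alpha$ to an automorphism of $TC_{S_0}(T)$ inside $\calf_0$, which one then adjusts to satisfy the commutator condition. The hard part throughout is exactly this passage from ``$\4\alpha'$ is an $\calf$-morphism'' to ``$\4\alpha'$ is an $\calf_0$-morphism'': everything before it is formal, and the two hypotheses are doing nothing but supplying the two different mechanisms (generation by $p$-elements for (b), the $p$-power-index/hyperfocal theory for (c)) that make this last descent go through.
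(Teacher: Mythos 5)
Part (a) of your proposal is fine and is exactly the paper's one-line argument. Both (b) and (c), however, break down at precisely the step you yourself flag as the ``hard part,'' and in each case the key claim you make there is false, not merely unproved.

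In (c), your extend-and-restrict step agrees with the paper's, but the claim that the $p$-part of $\4\alpha'$ lies in $\Aut_{\calf_0}(TC_{S_0}(T))$ is wrong, and no ``hyperfocal-subgroup description'' can certify it. The trouble is that the extension condition does not hand you a canonical extension: what it gives may differ from a good one by an automorphism that is the identity on $T$ and on $TC_S(T)/T$, and such stability automorphisms are $p$-elements that need not lie in $\calf_0$. Concretely, take $p=2$, $S=D_8=\gen{r,s}$, $\calf=\calf_S(S)$, $T=Z(S)=\gen{r^2}$, $\cale=\calf_T(T)$ (so $\cale\nsg\calf$), $S_0=\gen{r^2,s}\iso C_2\times C_2$, and $\calf_0=\calf_{S_0}(S_0)$, which has $2$-power index in $\calf$ (every $\autf(P)$ is a $2$-group, so $\hyp(\calf)=1$). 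For $\alpha=\Id_T$, a perfectly legitimate output of the extension condition for $\cale\nsg\calf$ is $\4\alpha=c_r\in\autf(S)$, since $c_r|_T=\Id_T$ and $[c_r,C_S(T)]=[r,S]=\gen{r^2}=Z(T)$. Its restriction $\4\alpha'=c_r|_{S_0}$ is a nontrivial $2$-element (equal to its own $p$-part) satisfying all your constraints, yet $\Aut_{\calf_0}(S_0)=\Inn(S_0)=1$, so $\4\alpha'\notin\Aut_{\calf_0}(S_0)$; and since $\hyp(\calf)=1$ here, any hyperfocal-type membership criterion could only ever certify the identity. The paper's proof is organized exactly to dodge this: it \emph{first} reduces to $\alpha$ of order prime to $p$ (the set of $\alpha$ admitting good extensions in $\calf_0$ is a subgroup containing $\Inn(T)$ and all $p'$-elements, hence equals $O^p(\Aut_\cale(T))\Inn(T)=\Aut_\cale(T)$ because $\Inn(T)\in\sylp{\Aut_\cale(T)}$), and \emph{then} replaces $\4\alpha'$ by a power $\4\alpha'^{\,k}$ with $k\equiv1\pmod{|\alpha|}$ of order prime to $p$, which lands in $O^p(\autf(TC_{S_0}(T)))\le\Aut_{\calf_0}(TC_{S_0}(T))$. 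The powering trick destroys the spurious $p$-part while fixing the restriction to $T$ --- possible only because $\alpha$ is a $p'$-element --- whereas your commuting $p$/$p'$-decomposition retains the spurious $p$-part, which genuinely need not lie in $\calf_0$.

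In (b), the opening reduction rests on a false equivalence. For groups one indeed has $O^{p'}(G)=\gen{\text{$p$-elements of }G}$, but for fusion systems $O^{p'}(\cale)=\cale$ is not detected this way on $\Aut_\cale(T)$. Since $T$ is fully automized in $\cale$, $\Inn(T)$ is a \emph{normal} Sylow $p$-subgroup of $\Aut_\cale(T)$, so every element of $p$-power order lies in $\Inn(T)$, and your claim would force $\Aut_\cale(T)=\Inn(T)$, i.e.\ $\Out_\cale(T)=1$, whenever $\cale=O^{p'}(\cale)$. That fails for most systems of interest: any simple fusion system over a nonabelian $p$-group with $\Out_\cale(T)\ne1$ is a counterexample (simplicity forces $O^{p'}(\cale)=\cale$ since $O^{p'}(\cale)\nsg\cale$), for instance the $3$-fusion system of $\SL_3(3)$, which is reduced while $\Out_\cale(T)\iso C_2\times C_2$. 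Indeed, had your reduction been valid, (b) would be a triviality (elements of $p$-power order are inner and extend by conjugation), whereas it is in fact a substantive theorem. Your closing step --- saturation of $\calf_0$ gives \emph{some} $\calf_0$-extension of $\alpha$, ``which one then adjusts to satisfy the commutator condition'' --- is an unsupported assertion and is the entire content of the statement: the saturation extension differs from your $\4\alpha'$ by exactly the kind of stability automorphism exhibited above, with no mechanism provided for correcting it inside $\calf_0$. The paper does not attempt this at all; it deduces (b) from part (a) together with Craven's Corollary 8.19 (a weakly normal subsystem $\cale$ with $\cale=O^{p'}(\cale)$ is normal), which is a genuinely nontrivial theorem and not something to be reproved in a paragraph.
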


\begin{proof} If $T$ is strongly closed in $\calf$, then it is also strongly 
closed in $\calf_0$. If the strong invariance condition holds for 
$\cale\le\calf$, then it also holds for $\cale\le\calf_0$. This proves (a). 

Point (b) was shown by David Craven in \cite[Corollary 8.19]{Craven}.

Under the hypotheses of (c), $\cale$ is weakly normal in $\calf_0$ by (a), 
and it remains to prove that each $\alpha\in\Aut_\cale(T)$ extends to 
$\4\alpha\in\Aut_{\calf_0}(TC_{S_0}(T))$ such that 
$[\4\alpha,C_{S_0}(T)]\le Z(T)$. This clearly holds for $\alpha\in\Inn(T)$, 
and since $\Inn(T)\in\sylp{\Aut_\cale(T)}$, it suffices to show it for 
$\alpha\in\Aut_\cale(T)$ of order prime to $p$. For such $\alpha$, since 
$\cale\nsg\calf$, there is $\5\alpha\in\autf(TC_S(T))$ such that 
$\5\alpha|_T=\alpha$ and $[\5\alpha,C_S(T)]\le Z(T)$, and $\5\alpha$ 
restricts to $\4\alpha\in\autf(TC_{S_0}(T))$ since $[\5\alpha,C_S(T)]\le 
T$. Upon replacing $\4\alpha$ by $\4\alpha^k$ for some appropriate 
$k\equiv1$ (mod $|\alpha|$), we can arrange that $\4\alpha$ has order prime 
to $p$ (and still $\4\alpha|_T=\alpha$ and $[\4\alpha,C_{S_0}(T)]\le 
Z(T)$). But then $\4\alpha\in 
O^p(\autf(TC_{S_0}(T)))\le\Aut_{\calf_0}(TC_{S_0}(T))$ since $\calf_0$ has 
$p$-power index in $\calf$, proving the extension condition for 
$\cale\le\calf_0$. \end{proof}

We refer to \cite[Example 8.18]{Craven} for an example of saturated fusion 
systems $\cale\le\calf\le\5\calf$ where $\cale\nsg\5\calf$, and $\cale$ is 
weakly normal but not normal in $\calf$.

\begin{Prop} [{Compare with \cite[Proposition 1.8]{O-simp}}] \label{O^p'(F)}
Let $\cale\nsg\calf$ be saturated fusion systems over finite $p$-groups $T\nsg S$. Let 
$\chi_0\:\autf(T)\too\Delta$ be a surjective homomorphism, for some 
$\Delta\ne1$ of order prime to $p$, such that 
$\Aut_\cale(T)\le\Ker(\chi_0)$. Then there is a unique proper 
normal subsystem $\calf_0\nsg\calf$ over $S$ such that 
	\beqq \Aut_{\calf_0}(S)=\bigl\{\alpha\in\autf(S)\,\big|\,
	\alpha|_T\in\Ker(\chi_0) \bigr\}, \label{e:AutF0(S)} \eeqq
and $\Aut_{\calf_0}(T)=\Ker(\chi)$ and $\cale\nsg\calf_0$. 

\end{Prop}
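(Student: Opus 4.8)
The plan is to obtain $\calf_0$ as a subsystem of index prime to $p$ cut out by a homomorphism to the $p'$-group $\Delta$, and then to verify the normality of $\cale$ in it separately. Since $\cale\nsg\calf$, the subgroup $T$ is strongly closed in $\calf$, so every $\alpha\in\autf(S)$ fixes $T$ and restricts to $\alpha|_T\in\autf(T)$; this gives a homomorphism $\rho\colon\autf(S)\too\autf(T)$, and I set $\chi=\chi_0\circ\rho\colon\autf(S)\too\Delta$. By construction $\Ker(\chi)=\{\alpha\in\autf(S)\mid\alpha|_T\in\Ker(\chi_0)\}$, the group prescribed in \eqref{e:AutF0(S)}. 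First I would check that $\chi$ is onto, so that $\Ker(\chi)$ is proper. As $T$ is strongly closed and normal in $S$ it is fully normalized, whence $\Aut_S(T)\in\sylp{\autf(T)}$; a Frattini argument gives $\autf(T)=O^{p'}(\autf(T))\cdot N_{\autf(T)}(\Aut_S(T))$, each $\beta\in N_{\autf(T)}(\Aut_S(T))$ extends to $\autf(S)$ by the extension axiom (its $N_\beta$ is all of $S$), and $O^{p'}(\autf(T))\le\Ker(\chi_0)$ because $|\Delta|$ is prime to $p$; hence $\chi_0(\rho(\autf(S)))=\Delta$.

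Next I would produce $\calf_0$ from the classification of subsystems of index prime to $p$ \cite[\S I.7]{AKO}. The point to verify is that $\chi$ factors through $\Gamma_{p'}(\calf)=\autf(S)/\Aut^{0}_{\calf}(S)$, i.e. that $\Aut^{0}_{\calf}(S)=\Aut_{O^{p'}(\calf)}(S)\le\Ker(\chi)$. I would reduce this to a statement about generators: applying Alperin's fusion theorem inside $O^{p'}(\calf)$ expresses $\Aut^{0}_{\calf}(S)$ in terms of $O^{p'}(\autf(S))$ and automorphisms $\5\eta\in\autf(S)$ with $\5\eta|_R\in O^{p'}(\Aut_\calf(R))$ for some fully normalized $R$. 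When $T\le R$ the restriction $\Aut_\calf(R)\too\autf(T)$ is a homomorphism carrying $O^{p'}(\Aut_\calf(R))$ into $O^{p'}(\autf(T))\le\Ker(\chi_0)$, so $\5\eta|_T\in\Ker(\chi_0)$; the remaining generators, where $T\not\le R$, are controlled by the strong invariance of $\cale$ together with $\Aut_\cale(T)\le\Ker(\chi_0)$, which force $\5\eta|_T$ into $\Aut_\cale(T)\cdot O^{p'}(\autf(T))\le\Ker(\chi_0)$. Granting this, the correspondence furnishes a unique saturated $\calf_0\le\calf$ over $S$ of index prime to $p$ with $\Aut_{\calf_0}(S)=\Ker(\chi)$; it is normal in $\calf$, and the morphism description of such subsystems gives, for every $P\ge T$, $\Aut_{\calf_0}(P)=\{\beta\in\Aut_\calf(P)\mid\beta|_T\in\Ker(\chi_0)\}$, so in particular $\Aut_{\calf_0}(T)=\Ker(\chi_0)$. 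Uniqueness among proper normal subsystems over $S$ follows since any such system whose automizer of $S$ has index prime to $p$ is itself of index prime to $p$, and is then pinned down by the correspondence.

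It remains to show $\cale\nsg\calf_0$. The containment $\cale\le\calf_0$ holds because $O^{p'}(\cale)\le O^{p'}(\calf)\le\calf_0$ while $\Aut_\cale(T)\le\Ker(\chi_0)=\Aut_{\calf_0}(T)$, and these together generate $\cale$; weak normality of $\cale$ in $\calf_0$ is then immediate from Proposition~\ref{l:E<|F0}(a). For the extension condition, let $\alpha\in\Aut_\cale(T)$; since $\Inn(T)\in\sylp{\Aut_\cale(T)}$ I may assume $\alpha$ has order prime to $p$. Because $\cale\nsg\calf$ there is $\5\alpha\in\autf(TC_S(T))$ with $\5\alpha|_T=\alpha$ and $[\5\alpha,C_S(T)]\le Z(T)$, and, replacing $\5\alpha$ by a suitable power exactly as in the proof of Proposition~\ref{l:E<|F0}(c), I may take $\5\alpha$ of order prime to $p$. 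Now $\5\alpha|_T=\alpha\in\Aut_\cale(T)\le\Ker(\chi_0)$, so the description above with $P=TC_S(T)\ge T$ places $\5\alpha$ in $\Aut_{\calf_0}(TC_S(T))$, which is the required extension; hence $\cale\nsg\calf_0$.

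The main obstacle is the descent of $\chi$ through the fusion, namely the inclusion $\Aut^{0}_{\calf}(S)\le\Ker(\chi)$ in the case of essential subgroups $R$ not containing $T$: the clean part of the argument only covers generators restricting from subgroups above the strongly closed $T$, and I expect the verification that the essential automorphisms transverse to $T$ restrict into $\Aut_\cale(T)\cdot O^{p'}(\autf(T))$ — equivalently, that no further $\calf$-relations enlarge $\Aut_{\calf_0}(P)$ beyond the stated form for all $P\ge T$ — to be the delicate heart. This, and the requirement that the extension $\5\alpha$ be simultaneously of $p'$-order and inside $\calf_0$, are precisely where the hypothesis $\Aut_\cale(T)\le\Ker(\chi_0)$ and the strong closure of $T$ are indispensable.
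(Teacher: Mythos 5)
Your overall strategy --- producing $\calf_0$ from the theory of subsystems of index prime to $p$ and then checking the extension condition for $\cale$ separately --- is reasonable in outline, but the proof has a genuine gap at its central step, one you yourself flag as ``the delicate heart.'' To invoke the correspondence of \cite[Theorem I.7.7]{AKO} you must show that $\chi=\chi_0\circ\rho$ kills $\Aut^0_\calf(S)=\Aut_{O^{p'}(\calf)}(S)$, and this is not a routine verification: it is essentially the entire content of \cite[Proposition 1.8]{O-simp}, which the paper cites rather than re-proves (the only new content in the paper's proof is the identification $\Aut_{\calf_0}(T)=\Ker(\chi_0)$ and the normality $\cale\nsg\calf_0$). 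Moreover, the route you sketch for closing the gap cannot work as stated. Membership of an automorphism of $S$ in $\Aut^0_\calf(S)$ is witnessed by composites of restrictions of elements of $O^{p'}(\autf(Q))$ for varying $Q$; these factors are not themselves automorphisms of $S$, and they need not preserve subgroups containing $T$, so the restriction of such a composite to $T$ is a composite of morphisms between (generally proper) subgroups of $T$. On such morphisms $\chi_0$, which is defined only on $\autf(T)$, cannot even be evaluated, so the claim ``$\5\eta|_T\in\Aut_\cale(T)\cdot O^{p'}(\autf(T))$'' is not a meaningful reduction for them. This is precisely why the proof of \cite[Proposition 1.8]{O-simp} constructs an extension $\5\chi\:\Mor(\calf|_{\calh^*})\too\Delta$ of $\chi_0$ to all morphisms between subgroups in $\calh^*=\{P\in\calf^c\mid P\cap T\in\cale^c\}$, using Aschbacher's theory of the normal subsystem $\cale\nsg\calf$; the hypothesis $\Aut_\cale(T)\le\Ker(\chi_0)$ is what makes that extension well defined, not a generator-by-generator restriction argument.

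There is a second, related gap. You assert that ``the morphism description of such subsystems'' gives $\Aut_{\calf_0}(P)=\{\beta\in\autf(P)\mid\beta|_T\in\Ker(\chi_0)\}$ for \emph{every} $P\ge T$, and you use this both at $P=T$ (to conclude $\Aut_{\calf_0}(T)=\Ker(\chi_0)$) and at $P=TC_S(T)$ (for the extension condition). The abstract correspondence in \cite[\S I.7]{AKO} provides no such pointwise description; in the paper it comes from the construction of $\5\chi$, and only for $P\in\calh^*$ --- in particular only for $\calf$-centric $P$. Since $T$ itself need not be $\calf$-centric, the equality $\Aut_{\calf_0}(T)=\Ker(\chi_0)$ requires a separate argument: the paper proves the two inclusions by applying the extension axiom twice, once to $\calf_0$ (extending $\beta\in\Aut_{\calf_0}(T)$ to $TC_S(T)$ inside $\calf_0$) and once to $\calf$ (extending $\gamma\in\Ker(\chi_0)$ to $TC_S(T)$ inside $\calf$), and then reading off membership from the description at $TC_S(T)\in\calh^*$. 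Your verification of the extension condition for $\cale\nsg\calf_0$ and your Frattini-type surjectivity argument for $\chi$ are fine, but both sit on top of the two unproven points above, so the proposal as it stands does not establish the proposition.
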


\begin{proof} This was shown in \cite[Proposition 1.8]{O-simp}, except for 
the statements that $\Aut_{\calf_0}(T)=\Ker(\chi)$ and $\cale$ is normal in 
$\calf_0$. Since $\cale$ is weakly normal in $\calf_0$ by Proposition 
\ref{l:E<|F0}(a), normality will follow one we have checked the extension 
condition. 

Set $\calh^*=\{P\in\calf^c\,|\,P\cap T\in\cale^c\}$. Thus 
$TC_S(T)\in\calh^*$. In the proof of \cite[Proposition 1.8]{O-simp}, we 
construct a map 
	\[ \5\chi\: \Mor(\calf|_{\calh^*}) \Right4{} \Delta \]
with the property that for each $P\in\calh^*$ such that $P\ge T$, and each 
$\beta\in\autf(P)$, we have $\5\chi(\beta)=\chi_0(\beta|_T)$ and 
$\Aut_{\calf_0}(P)=\autf(P)\cap\5\chi^{-1}(1)$. (Note that 
$\beta|_T\in\autf(T)$ since $T$ is strongly closed in $\calf$.) So 
\eqref{e:AutF0(S)} holds, and 
	\beqq \Aut_{\calf_0}(TC_S(T)) = \{ \alpha\in\autf(TC_S(T)) \,|\, 
	\alpha|_T\in\Ker(\chi_0) \}. \label{e:xx} \eeqq
Since each $\beta\in\Aut_{\calf_0}(T)$ extends to some 
$\4\beta\in\Aut_{\calf_0}(TC_S(T))$ by the extension axiom 
\cite[Proposition I.2.5]{AKO} applied to $\calf_0$, \eqref{e:xx} shows that 
$\Aut_{\calf_0}(T)\le\Ker(\chi_0)$. Similarly, each 
$\gamma\in\Ker(\chi_0)$ extends to some 
$\4\gamma\in\Aut_{\calf}(TC_S(T))$ by the extension axiom for $\calf$, 
and $\4\gamma,\gamma\in\Mor(\calf_0)$ by \eqref{e:xx} again. Thus 
$\Aut_{\calf_0}(T)=\Ker(\chi_0)$.

For each $\alpha\in\Aut_\cale(T)$, the extension condition for 
$\cale\nsg\calf$ implies that there exists 
$\4\alpha\in\autf(TC_S(T))$ extending $\alpha$ and with 
$[\4\alpha,C_S(T)]\le Z(T)$. Then $\4\alpha\in\Aut_{\calf_0}(TC_S(T))$ by 
\eqref{e:xx} and since $\4\alpha|_T=\alpha\in\Aut_\cale(T)$. So the 
extension condition holds for $\cale\le\calf_0$, proving that 
$\cale\nsg\calf_0$. 
\end{proof}

\begin{Defi} 
Let $\cale\nsg\calf$ be saturated fusion systems over finite $p$-groups 
$T\nsg S$, and define 
	\[ C_S(\cale) = \{ x\in S\,|\, C_\calf(x)\ge \cale \}. \]
We say that $\cale$ is \emph{centric} in $\calf$ if $C_S(\cale)\subseteq T$.
\end{Defi}

By a theorem of Aschbacher (see Notation 6.1 and (6.7.1) in \cite{A-gfit}), 
for each such $\cale\nsg\calf$, $C_S(\cale)$ is a subgroup of $S$, and 
$C_\calf(C_S(\cale))$ contains $\cale$. Thus each morphism in $\cale$ 
extends to a morphism in $\calf$ between subgroups containing $C_S(\cale)$ 
that is the identity on $C_S(\cale)$.

For each saturated fusion system $\calf$ over a finite $p$-group $S$, we 
set 
	\[ \Aut(\calf) = \{\beta\in\Aut(S) \,|\, \9\beta\calf=\calf \}: \]
the group of ``fusion preserving'' automorphisms of $S$. (This group was 
denoted $\Aut(S,\calf)$ in \cite{O-simp}.) For $\beta\in\Aut(\calf)$, let 
$c_\beta$ be the automorphism of the category $\calf$ that sends $P$ to 
$\beta(P)$ and sends $\varphi\in\homf(P,Q)$ to 
$\beta\varphi\beta^{-1}\in\homf(\beta(P),\beta(Q))$.

The next theorem contains most of Theorems 1.14 and 2.3 in \cite{O-simp}, 
together with some additional information about automorphisms of the 
systems. 


\begin{Thm} \label{t:p-solv}
Let $\cale\nsg\calf$ be saturated fusion systems over finite $p$-groups 
$T\nsg S$. Assume that $\autf(T)/\Aut_\cale(T)$ is $p$-solvable 
(equivalently, that $\outf(T)$ is $p$-solvable). 
\begin{enuma} 

\item In all cases, there is a sequence 
	\beqq 
	\parbox{\shorter}{$\calf_0\le\calf_1\le\calf_2\le\cdots\le 
	\calf_m=\calf$ of saturated fusion subsystems (for some $m\ge0$) such 
	that for each $0\le i< m$, 
	$\calf_i$ is normal of $p$-power index or index prime to $p$ in 
	$\calf_{i+1}$ and $\cale\nsg\calf_i\nsg\calf$; 
	}
	\label{e:yy} \eeqq
and such that $\calf_0$ is a fusion system over $TC_S(T)$ and 
$\Aut_{\calf_0}(T)=\Aut_\cale(T)$.

\item If $\cale$ is centric in $\calf$, then there is a sequence of 
subsystems satisfying \eqref{e:yy} such that $\calf_0=\cale$. 

\end{enuma}
In either case, the subsystems can be chosen so that for each 
$1\le j\le m$, and each $\beta\in\Aut(\calf_j)$ with 
$c_\beta(\cale)=\cale$, we have $c_\beta(\calf_i)=\calf_i$ for all $0\le 
i<j$. 
\end{Thm}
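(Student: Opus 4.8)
The plan is to carry out the construction of \eqref{e:yy} in parts~(a) and~(b) using a \emph{canonical} chain of subgroups of $\autf(T)$, and then to deduce the equivariance from the uniqueness already built into that construction. Write $G=\autf(T)$ and $N=\Aut_\cale(T)$; since $\cale\nsg\calf$ we have $N\nsg G$, and $G/N$ is $p$-solvable by hypothesis. In \eqref{e:yy} each $\calf_i$ satisfies $\cale\nsg\calf_i\nsg\calf$, so (as $T$ is strongly closed) $G_i:=\Aut_{\calf_i}(T)$ is a normal subgroup of $G$ and $N=G_0\le G_1\le\cdots\le G_m=G$, with $G_{i+1}/G_i$ a $p'$-group at a prime-to-$p$ step and a $p$-group at a $p$-power step. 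Moreover $\calf_i$ is pinned down inside $\calf_{i+1}$ by the subgroup $G_i$: for prime-to-$p$ steps this is the uniqueness of Proposition~\ref{O^p'(F)}, and for $p$-power steps the analogous uniqueness for subsystems of $p$-power index. Thus the only freedom in \eqref{e:yy} lies in the choice of the chain $(G_i)$, and I would fix it canonically.

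For the canonical choice I would let the subgroups $G_i/N$ run through the terms $O_{p'}(G/N)\le O_{p',p}(G/N)\le\cdots$ of the upper $p'$-$p$-series of the $p$-solvable group $G/N$ (allowing repetitions, e.g.\ across the single group-changing step to $TC_S(T)$ in case~(a)). Each term is characteristic in $G/N$, each successive factor is a $p$- or $p'$-group, and the series exhausts $G/N$ by $p$-solvability. The one extra group-theoretic input I need is the standard restriction property of this series: if $N\le H\nsg G$, then $O_{p'}(H/N)=O_{p'}(G/N)\cap(H/N)$, and inductively the $k$-th term of the upper $p'$-$p$-series of $H/N$ is the intersection with $H/N$ of the $k$-th term of that of $G/N$. (At each stage the two inclusions are the usual ones: a characteristic $p'$- or $p$-subgroup of a normal subgroup is itself normal, hence lies inside the corresponding term of the larger group, and conversely.) Applied with $H=G_{i+1}\nsg G$ (using $G_k\le G_{i+1}$ for $k\le i+1$), this shows that $G_i/N$ is a term of the upper $p'$-$p$-series of $G_{i+1}/N$, and in particular is characteristic in $G_{i+1}/N$.

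Now fix $1\le j\le m$ and $\beta\in\Aut(\calf_j)$ with $c_\beta(\cale)=\cale$; I would show $c_\beta(\calf_i)=\calf_i$ for $0\le i<j$ by downward induction on $i$. Note first that $c_\beta(\cale)=\cale$ forces $\beta(T)=T$, whence $\beta$ fixes $C_S(T)$, $TC_S(T)$ and $S$; and $c_\beta$ acts on $\Aut_{\calf_{i+1}}(T)$ (whenever it preserves $\calf_{i+1}$) as conjugation by $\beta|_T$, fixing $N=\Aut_\cale(T)$. The base case is $c_\beta(\calf_j)=\calf_j$, which holds since $\beta\in\Aut(\calf_j)$. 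For the inductive step, suppose $c_\beta(\calf_{i+1})=\calf_{i+1}$. Then $c_\beta$ is a category automorphism of $\calf_{i+1}$ acting on $G_{i+1}/N$ and fixing $N$, so by the previous paragraph it fixes the characteristic subgroup $G_i/N$, i.e.\ $c_\beta(G_i)=G_i$. Hence $c_\beta(\calf_i)$ is a normal subsystem of $\calf_{i+1}$ of the same index type, over the same ($\beta$-fixed) group, with $\Aut_{c_\beta(\calf_i)}(T)=c_\beta(G_i)=G_i$; the uniqueness used to construct $\calf_i$ then forces $c_\beta(\calf_i)=\calf_i$. Finally $c_\beta(\calf_0)=\calf_0$: in case~(b) this is $c_\beta(\cale)=\cale$, and in case~(a) the system $\calf_0$ is the subsystem over the $\beta$-fixed group $TC_S(T)$ determined by $G_0=N$, which $c_\beta$ preserves as well.

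The step I expect to be the main obstacle is passing from ``$G_i/N$ characteristic in $G/N$'' to ``$G_i/N$ characteristic in $G_{i+1}/N$'': because $\beta$ is only an automorphism of $\calf_j$, it acts on $G_{i+1}=\Aut_{\calf_{i+1}}(T)$ but not on the whole of $\autf(T)$, and a subgroup characteristic in $G/N$ need not remain characteristic in the proper normal subgroup $G_{i+1}/N$. The restriction property of the upper $p'$-$p$-series is exactly what repairs this, reidentifying each relevant $G_i/N$ as an intrinsic characteristic subgroup of $G_{i+1}/N$ itself. The remaining care is bookkeeping: one must ensure that the $\calf_i$ really are determined by the $\autf(T)$-data $G_i$ at \emph{both} index types, so that preservation of $G_i$ by $c_\beta$ yields preservation of $\calf_i$ --- Proposition~\ref{O^p'(F)} for the prime-to-$p$ steps, and the corresponding classification of $p$-power-index subsystems for the others.
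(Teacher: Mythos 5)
Your argument for part~(a) is correct and runs close to the paper's own: the paper likewise reduces everything to a canonical chain of subgroups of $\autf(T)$ and then appeals to the uniqueness of the subsystem constructions at each step. The only difference is that the paper builds its chain from the top down, setting $G_i=O^p(G_{i+1})G_0$ or $G_i=O^{p'}(G_{i+1})G_0$ with $G_0=\Aut_\cale(T)$, so that each $G_i$ is visibly invariant under any automorphism preserving $G_{i+1}$ and $G_0$; your upper $p'$-$p$-series does the same job, but only after the restriction lemma you state (which is correct), so you pay for the canonicity with that extra piece of group theory.

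The genuine gap is in part~(b). Your strategy rests on the assertion that ``$\calf_i$ is pinned down inside $\calf_{i+1}$ by the subgroup $G_i$,'' and this is false exactly for the steps that part~(b) adds. When $\cale$ is centric, the chain must continue below the subsystem over $TC_S(T)$ furnished by part~(a), and from that point on every $G_i$ equals $\Aut_\cale(T)$: those steps are invisible in $\autf(T)$. Concretely, in the reduced situation $S=TC_S(T)$ and $\autf(T)=\Aut_\cale(T)$, the paper constructs $\calf_2=\gen{\chi^{-1}(1)}\nsg\calf$ of index prime to $p$, where $\chi$ records the action of morphisms on the (abelian) quotient $S/T$, and then checks, using the extension condition for $\cale\nsg\calf$, that $\Aut_{\calf_2}(T)=\autf(T)$. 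So $\calf_2$ and $\calf$ itself are both normal subsystems of $\calf$ of index prime to $p$ with the same automorphism group of $T$, and no subgroup of $\autf(T)$ can single out $\calf_2$; Proposition~\ref{O^p'(F)} is inapplicable there because the relevant $\Delta$ is trivial. For these steps the construction, the verification that $\cale$ is normal in each subsystem, and the $c_\beta$-invariance must all come from the canonicity of constructions made on $S/T$ (in the paper: \cite[Corollary 2.2 and Lemma 1.6]{O-simp} and the quotient system $\calf/T$), none of which appears in your proposal, so your downward induction cannot even get started on them. Two further omissions: \eqref{e:yy} demands $\calf_i\nsg\calf$ and not merely $\calf_i\nsg\calf_{i+1}$, which the paper deduces from the equivariance statement combined with the transitivity criterion \eqref{e:zz}; and at the $p$-power-index steps the normality $\cale\nsg\calf_i$ is not a formal consequence of uniqueness --- it is Proposition~\ref{l:E<|F0}(c), which is precisely the point this correction paper was written to supply, and your proof never invokes it.
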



\begin{proof} We outline here the proof as given in \cite{O-simp}: enough 
to explain how Propositions \ref{l:E<|F0}(c) and \ref{O^p'(F)} are used to 
prove that $\cale\nsg\calf_i$ for each $i$, and explain why the last 
statement is true. We refer frequently to the following transitivity result 
for normality (see \cite[7.4]{A-gfit}):
	\beqq  \parbox{\shorter}{If $\calf_2\nsg\calf_1\nsg\calf$ are 
	saturated fusion systems over finite $p$-groups $S_2\nsg S_1\nsg S$ 
	such that $c_\alpha(\calf_2)=\calf_2$ for each 
	$\alpha\in\Aut_\calf(S_1)$, then $\calf_2\nsg\calf$.}
	\label{e:zz} \eeqq

\smallskip

\noindent\textbf{(a) } Set $G=\autf(T)$ and $G_0=\Aut_\cale(T)$. Since 
$G/G_0$ is $p$-solvable, there are subgroups $G_0\nsg G_1\nsg \cdots\nsg 
G_m=G$ (some $m\ge0$) such that for each $0\le i<m$, either 
$G_i=O^p(G_{i+1})G_0$ (hence $G_{i+1}/G_{i}$ is a $p$-group), or 
$G_i=O^{p'}(G_{i+1})G_0$ (hence $G_{i+1}/G_i$ has order prime to $p$). In 
particular, the $G_i$ are all normal in $G$ since $G_0$ is. For each $i$, 
set $S_i=N_S^{G_i}(T)\defeq\{x\in S\,|\,c_x\in G_i\}$. Thus $S_i\nsg S$ and 
$\Aut_{S_i}(T)=G_i\cap\Aut_S(T)\in\sylp{G_i}$ for each $i$, $S_m=S$, and 
$S_0=TC_S(T)$. We will construct successively subsystems 
$\calf=\calf_m\ge\calf_{m-1}\ge\cdots\ge\calf_0$ in $\calf$ such that for 
each $0\le i\le m-1$, $\calf_i$ is a fusion system over $S_i$, 
$\Aut_{\calf_i}(T)=G_i$, and the conditions on $\calf_i$ in \eqref{e:yy} 
all hold.

Assume, for some $0\le i< m$, that $\calf_{i+1}\nsg\calf$ has been 
constructed satisfying these conditions. Thus $\Aut_{\calf_{i+1}}(T)=G_{i+1}$. 
If $G_{i+1}/G_{i}$ has order prime to $p$, then by Proposition \ref{O^p'(F)}, 
applied with $G_{i+1}/G_{i}$ in the role of $\Delta$, there is a unique 
saturated subsystem $\calf_{i}\nsg\calf_{i+1}$ of index prime to $p$ over 
$S_{i+1}=S_{i}$ such that 
$\Aut_{\calf_{i}}(S_{i+1})=\{\alpha\in\Aut_{\calf_{i+1}}(S_{i+1})\,|\,\alpha|_T\in 
G_{i}\}$, and also $\cale\nsg\calf_{i}$ and 
$\Aut_{\calf_{i}}(T)=G_{i}$. 

If $G_{i+1}/G_{i}$ is a $p$-group, then the argument in the proof of 
\cite[Theorem 1.14]{O-simp} shows that there is a unique 
$\calf_{i}\nsg\calf_{i+1}$ over $S_{i}$ of $p$-power index, and 
$\cale\le\calf_{i}$ by \cite[Proposition 1.11]{O-simp}. Then 
$\cale\nsg\calf_{i}$ by Proposition \ref{l:E<|F0}(c). Since 
$\Aut_{\calf_{i}}(T)$ has $p$-power index in $\Aut_{\calf_{i+1}}(T)$, we 
have 
	\[ \Aut_{\calf_{i}}(T) = 
	O^p(\Aut_{\calf_{i+1}}(T))\Aut_{S_{i}}(T) = 
	O^p(G_{i+1})G_{i}=G_{i}: \]
where the first equality holds since 
$\Aut_{S_{i}}(T)\in\sylp{\Aut_{\calf_{i}}(T)}$, and the second since 
$G_{i+1}=\Aut_{\calf_{i+1}}(T)$ and $\Aut_{S_{i}}(T)\in\sylp{G_{i}}$. Thus 
$\Aut_{\calf_{i}}(T)=G_{i}$.

\smallskip

\noindent\textbf{(b) } By (a) and \eqref{e:zz}, it suffices to prove this when 
$S=TC_S(T)$. By \cite[Corollary 2.2]{O-simp}, $S/T=TC_S(T)/T$ is abelian. 

Set $\calh=\{P\le S\,|\,P\ge C_S(T)\}$, 
and let $\calf^*\subseteq\calf$ be the full subcategory with 
$\Ob(\calf^*)=\calh$. Define 
	\[ \chi\: \Mor(\calf^*) \Right5{} \Aut_{\calf/T}(S/T) \]
by sending $\varphi\in\Hom_{\calf}(P,Q)$ to the induced automorphism of 
	\[ S/T = PT/T = QT/T \cong P/(P\cap T) \cong Q/(Q\cap T). \] 
Here, $PT=QT=S$ since 
$P,Q\in\calh$ and $S=TC_S(T)$, and $\varphi(P\cap T)\le Q\cap T$ 
since $T$ is strongly closed. Thus each $\varphi\in\Mor(\calf^*)$ factors 
through some $\4\varphi\in\Aut(S/T)$. In the notation of Craven 
\cite[Definition 5.5]{Craven}, $\4\varphi\in\Aut_{\4\calf_T}(S/T)$, and 
so $\4\varphi\in\Aut_{\calf/T}(S/T)$ by \cite[Theorem 5.14]{Craven}. 
See also \cite[Theorem 12.5]{A-gfit} for a different proof that 
$\4\varphi\in\Mor(\calf/T)$.

We now apply \cite[Lemma 1.6]{O-simp}, whose hypotheses (i)--(v) are shown 
to hold in the proof of \cite[Theorem 2.3]{O-simp}. By that lemma, 
$\calf_2\defeq\gen{\chi^{-1}(1)}$ is a saturated fusion subsystem over $S$ 
normal of index prime to $p$ in $\calf$ such that 
$\Aut_{\calf_2}(S)=\Ker(\chi|_{\Aut_{\calf}(S)})$. 

By Proposition \ref{l:E<|F0}(a), $\cale$ is weakly normal in $\calf_2$. If 
$\alpha\in\Aut_\cale(T)$, then since $\cale\nsg\calf$, $\alpha$ extends to 
$\4\alpha\in\autf(S)$ such that $[\4\alpha,C_S(T)]\le Z(T)$. Since 
$S=TC_S(T)$, this implies that $\chi(\4\alpha)=1$, and hence that 
$\4\alpha\in\Aut_{\calf_2}(S)$. So the extension condition holds, and 
$\cale\nsg\calf_2$. 

The construction of $\calf_1\nsg\calf_2$ of $p$-power index such that 
$\cale\nsg\calf_1$ and has index prime to $p$ follows from exactly 
the same argument as used in \cite{O-simp}, except that $\cale$ is normal 
in $\calf_1$ by Proposition \ref{l:E<|F0}(c). 

\smallskip

\noindent\textbf{(a,b) } It remains to prove the last statement (invariance 
under automorphisms), and show that $\calf_i\nsg\calf$ for all $i$ (not 
only for $i=m-1$). To see this, choose $1\le j\le m$ and 
$\beta\in\Aut(\calf_j)\le\Aut(S_j)$ such that $c_\beta(\cale)=\cale$. Then 
$\beta(T)=T$ and $\9\beta\Aut_\cale(T)=\Aut_\cale(T)$. In (b), we have 
$c_\beta(\calf_i)=\calf_i$ for $0\le i<j$ by the uniqueness of choices of 
subsystems at each stage. In (a), 
$\9\beta\Aut_{\calf_i}(T)=\Aut_{\calf_i}(T)$ for each $0\le i<j$ by 
construction of $G_i=\Aut_{\calf_i}(T)$, and hence by the uniqueness of the 
choices (depending only on the $G_i$), we have 
$c_\beta(\calf_i)=\calf_i$. 

In particular, if $0\le i<m$ is such that $\calf_{i+1}\nsg\calf$, this says 
that $c_\beta(\calf_i)=\calf_i$ for each 
$\beta\in\Aut_\calf(S_{i+1})\le\Aut(\calf_{i+1})$, and together with 
\eqref{e:zz}, it implies that $\calf_i\nsg\calf$. It now follows 
inductively that $\calf_i\nsg\calf$ for each $i$.
\end{proof}

For each saturated fusion system $\calf$ over a finite $p$-group $S$, set 
$\calf^\infty=\calf_0$ for any sequence 
$\calf_0\nsg\calf_1\nsg\cdots\nsg\calf_m=\calf$ of saturated subsystems 
such that $O^p(\calf_0)=O^{p'}(\calf_0)=\calf_0$, and such that 
$\calf_i\nsg\calf$ and $\calf_i$ has index prime to $p$ or $p$-power index 
in $\calf_{i+1}$ for each $0\le i<m$. By \cite[Lemma 1.13]{O-simp}, 
$\calf^\infty$ is independent of the choice of the $\calf_i$.

\begin{Thm}[{\cite[Theorem 2.3]{O-simp}}] \label{solv}
Let $\cale\nsg\calf$ be saturated fusion systems over finite $p$-groups 
$T\nsg S$ such that $\cale$ is centric in $\calf$. Assume either
\begin{enuma} 
\item $\autf(T)/\Aut_\cale(T)$ is $p$-solvable; or 
\item $\Out(\cale)\defeq\Aut(\cale)/\Aut_\cale(T)$ is $p$-solvable.
\end{enuma}
Then $\calf^\infty=\cale^\infty$. 
\end{Thm}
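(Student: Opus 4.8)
The plan is to reduce hypothesis (b) to hypothesis (a), and then, under (a), to splice together two normal chains: the chain from $\calf$ down to $\cale$ furnished by Theorem \ref{t:p-solv}, and the canonical chain from $\cale$ down to $\cale^\infty$ obtained by iterating $O^p$ and $O^{p'}$. If every term of the combined chain is normal in $\calf$ and has prime-to-$p$ or $p$-power index in its successor, then it is an admissible chain computing $\calf^\infty$; since its bottom term is the reduced system $\cale^\infty$, the independence statement \cite[Lemma 1.13]{O-simp} will give $\calf^\infty=\cale^\infty$.

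First I would record the key observation that $\autf(T)\le\Aut(\cale)$. Indeed, for $\alpha\in\autf(T)$ the strong invariance condition in Definition \ref{d:E<|F} (with $\psi=\alpha|_Q$) shows $c_\alpha(\Hom_\cale(P,Q))\subseteq\Hom_\cale(\alpha P,\alpha Q)$ for all $P\le Q\le T$, and applying the same to $\alpha^{-1}$ gives $c_\alpha(\cale)=\cale$, i.e.\ $\alpha\in\Aut(\cale)$. Since $\Aut_\cale(T)\le\autf(T)$, this embeds $\autf(T)/\Aut_\cale(T)$ as a subgroup of $\Out(\cale)$; as $p$-solvability passes to subgroups, (b) implies (a), and it suffices to treat case (a).

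Under (a), Theorem \ref{t:p-solv}(b) (applicable since $\cale$ is centric in $\calf$) provides a chain $\cale=\calf_0\nsg\calf_1\nsg\cdots\nsg\calf_m=\calf$ with each $\calf_i\nsg\calf$ and each of prime-to-$p$ or $p$-power index in $\calf_{i+1}$. Below $\cale$ I would take the iterated chain $\cale^\infty=\cale_0\nsg\cale_1\nsg\cdots\nsg\cale_k=\cale$ in which each $\cale_j$ is $O^p$ or $O^{p'}$ of $\cale_{j+1}$, so that consecutive indices are again of the two allowed types and the bottom is reduced. Because $O^p$ and $O^{p'}$ are canonical, each $\cale_j$ is characteristic in $\cale$, i.e.\ invariant under $\Aut(\cale)$; by induction on $j$ (using \eqref{e:zz} at the step $\cale_j\nsg\cale_{j+1}\nsg\cale$, whose hypothesis holds since $\cale_j$ is characteristic in $\cale_{j+1}$) one gets $\cale_j\nsg\cale$. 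Then, since $\autf(T)\le\Aut(\cale)$ fixes each $\cale_j$, a second application of \eqref{e:zz} to $\cale_j\nsg\cale\nsg\calf$ yields $\cale_j\nsg\calf$. Concatenating the two chains produces the desired admissible chain for $\calf^\infty$ with bottom $\cale^\infty$.

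The main obstacle is the verification in the previous paragraph that the canonical subsystems below $\cale$ are normal in $\calf$, not merely in the successive systems. This rests on two points that need care: that $\Aut(\cale)$-invariance of $\cale_{j+1}$ is inherited by its canonical subsystem $\cale_j$ (which requires that each $\gamma\in\Aut(\cale)$ preserve the strongly closed underlying group of $\cale_{j+1}$ and restrict to a fusion-preserving automorphism of $\cale_{j+1}$), and that the transitivity criterion \eqref{e:zz} may be invoked twice, first within $\cale$ and then within $\calf$. Once these are in place the result follows formally, paralleling the bookkeeping used in the proof of Theorem \ref{t:p-solv}.
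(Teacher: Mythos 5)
Your proposal is correct and takes essentially the same route as the paper: reduce (b) to (a) via the embedding $\autf(T)/\Aut_\cale(T)\le\Out(\cale)$ (valid since $\cale\nsg\calf$), then combine Theorem \ref{t:p-solv}(b) with \cite[Lemma 1.13]{O-simp}. Your additional verification that the canonical chain below $\cale$ (iterating $O^p$ and $O^{p'}$, whose terms are characteristic and hence, via \eqref{e:zz}, normal in $\calf$) only spells out the splicing step that the paper's proof compresses into ``follows immediately.''
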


\begin{proof} Since $\autf(T)\le\Aut(\cale)$ (since $\cale\nsg\calf$), 
we have $\autf(T)/\Aut_\cale(T)\le\Out(\cale)$. So (b) 
implies (a). It thus suffices to prove the theorem when (a) holds, and this 
follows immediately from Theorem \ref{t:p-solv}(b) and \cite[Lemma 
1.13]{O-simp}.
\end{proof}


\end{document} 
